\theoremstyle{plain}
\newtheorem{theorem}{Theorem}
\newtheorem{lemma}[theorem]{Lemma}
\newtheorem{corollary}[theorem]{Corollary}
\newtheorem{proposition}[theorem]{Proposition}
\theoremstyle{definition}
\theoremstyle{remark}
\newtheorem{remark}[theorem]{Remark}
\def\Z{\ensuremath{\mathbb {Z}}}
\def\({\left(}
\def\){\right)}
\def\al{\alpha}
\def\abs#1{\vert\, #1\,\vert}
\def\be{\beta}
\def\iy{\infty}
\def\const{{\operatorname{const}}}
\def\modd{{\operatorname{mod}}}
\def\mC2#1{{\mathbf C}^{(2)}(#1)}
\def\mC#1#2{{\mathbf C}^{(#1)}(#2)}
\def\cM{{\mathfrak M}}
\def\fP{{\mathfrak p}}
\def\cH{{\cal H}}
\def\cP{{\mathcal {P}}}
\def\cQ{{\mathcal {Q}}}
\def\bydef{:=}
\def\vf{\varphi}
\def\d{\,\mathrm{d}}
\def\text#1{\mbox{ #1 }}
\def\i0t{\int_0^t}
\def\beq{\begin{equation}}
\def\eeq{\end{equation}}
\title{\bf Combinatorial Properties of Mills' Ratio.}
\author{Alexander Kreinin\\
\small Quantitative Research, Risk Analytics, IBM \\  [-0.8ex]
\small 185 Spadina Ave., Toronto, Ontario, M5T2C6, Canada \\  [-0.8ex]
\small\tt alex.kreinin@ca.ibm.com}
\date{Oct 1, 2012}
\begin{document}
\maketitle
\begin{abstract}
We consider combinatorial
properties of the Mills' ratio, $R(t)=\int_t^\iy\vf(x)\d x/\vf(t)$, 
where $\vf(t)$ is the standard normal density. We explore the interplay between
 a continued fraction expansion for the Mills' ratio, the Laplace polynomials 
and a new family of combinatorial identities.  

 \bigskip\noindent \textbf{Keywords:}  Mills' Ratio,   Laplace Polynomials, Bernstein theorem. \\
 Mathematics Subject Classifications: 05C88, 05C89
\end{abstract}
 


\section{Introduction}
Combinatorial identities often have striking
relations to the special functions. 
Let us consider the following two seemingly unrelated identities,
\beq
    \sum_{n=0}^\iy \sum_{m=0}^\iy \sum_{j=0}^{n}
    \frac{\(n+m\)!}{m! j! (m+2n+1-j)!}\,\, 2^{-n}  
    =\sqrt{2\pi} e^2 \(\Phi(2)-\Phi(1)\) ,  \label{eq_ident1}
\eeq
and
\beq
\sum_{k=0}^n (-1)^k \frac{1}{2k+1} {n\choose k}=\frac{2^{2n} \(n!\)^2 }{(2 n+1)! },
\label{eq_ident2}
\eeq
where $\Phi(\cdot)$ is the standard Normal distribution function\footnote{The first identity is new, 
to the best of our knowledge.
The second identity is known (see \cite{Riordan_CI}).}.
In this paper we show that these identities actually have common combinatorial nature linked to
the continued fraction for the Mills' ratio, 
$$ R(t) = \frac{\bar\Phi(t)}{\vf(t)}, $$
 where
$\vf(t)$ is the standard Normal density and $\bar\Phi(t)=\int_t^\iy\vf(s)\d s$
is the tail of the Normal distribution. 
The function $R(t)$  plays an important role in Probability Theory, Statistics,
Stochastic Analysis and many applied areas including Queueing theory, Reliability and
Mathematical Finance\footnote{Examples can be found in 
\cite{Feller70}, \cite{McKean}, \cite{Abram}, \cite{KendStuart}}.

Despite that the function $R(t)$ was named after John Mills, who tabulated its values \cite{Mills},
the first known statements about  $R(t)$ appeared, in fact, more than $200$ years
ago by Laplace  \cite{Laplace} who found the asymptotic expansion
\beq
  R(t)\sim \frac1t -\frac1{t^3} + \frac{1\cdot 3}{t^5} -\frac{1\cdot 3\cdot 5}{t^7} +\dots
\quad\text{for $t>0$}.
\label{eq_R}
\eeq
and  the continued fraction expansion,
\beq
    R(t) = \cfrac{1}{t+\cfrac{1}{t+\cfrac{2}{t+\cfrac{3}{t+\cfrac{4}{\ddots}}}}} \,.
\label{eq_Cont_frac}
\eeq
Laplace also found the rational approximations, 
$R(t)\sim Q_{k-1}(t)/P_k(t)$ for the function $R(t)$,
where  $P_k(t)$ and $Q_k(t)$ are polynomials of degree $k$  and
 wrote down these polynomials for $k\le 4$.  
Thirty years later, Jacobi \cite{Jacobi} gave a very short (and rigorous) derivation of the expansion (\ref{eq_R}).
Jacobi  also found recurrent equations for the  polynomials  in the rational approximation.

Since then, numerous papers and monographs discussing different properties of the Mill's ratio were published.
 A modified proof of asymptotic (\ref{eq_R}) is discussed in \cite{Feller70}. 
More accurate asymptotic expansions for the Mill's ration were obtained in
\cite{Ruben} and in \cite{Shenton}. The latter paper contains also a good account of the 
properties of the tail of a normal distribution.

Recently, a few interesting papers on continued fractions and general approximation schemes for the
Mills ratio were published. We mention here the papers
 \cite{Pinelis}, \cite{Baricz}, \cite{Kouba} and \cite{Dumbgen} analyzing 
 monotonicity properties of some functions involving $R(t)$. Based on these properties 
a series of irrational approximations for $R(t)$ were derived in \cite{Kouba} and \cite{Dumbgen}.

In this paper, we discuss combinatorial identities connected to the continuous fractions for the Mills ratio.
In Section~\ref{sec_bounds} we introduce Laplace polynomials whose ratio form the continuous fractions for
the Mills ratio  and study their properties. 
To make this paper self-contained, we re-derive in Section~\ref{sec_bounds} the rational approximations for the function
$R(t)$.


Despite that the recurrent relations for the polynomials $P_k$ and $Q_k$ have been known for
a very long time, their coefficients were not studied until recently.
The first analysis of these coefficients, published in \cite{Kouba},
 appeared only in $2006$, to the best of our knowledge.

It turned out that the properties of the polynomials  $P_k$ closely resemble those of
the Hermite polynomials (see also
\cite{Andrews}). The combinatorial structure  of the coefficients of the polynomials $Q_k$ is more complex.
We find rather  simple formula than the one in \cite{Kouba} in Section~\ref{sec_PM}.

In Section~\ref{sec_gf} the double generating functions of these polynomials is computed. 
From the double generating function we  find a
series of  combinatorial identities that  includes (\ref{eq_ident1}) and (\ref{eq_ident2}). 
In Section~\ref{sec_LH} we discuss Laplace polynomials connections to the Hermite polynomials.
The paper is closed with the relation between the Laplace polynomials and matching numbers in
Section~\ref{sec_LPmn}.

\subsection*{Acknowledgements}I am very  grateful to Ian Iscoe, Sebastian Jaimungal,  Alexey Kuznetsov, Tom Salisbury,
Eugene Seneta,
Isaac Sonin, Hans Tuenter and Vladimir Vinogradov for interesting comments and stimulating discussions.

\section{Polynomials $P_k(t), Q_k(t)$ and inequalities for $R(t)$}\label{sec_bounds}
In this Section, we derive a series of inequalities for the Mills ratio expressed in terms of the
Laplace polynomials, introduced below. 

Our approach is based on the following simple idea. The function $R(t)$ can be
represented as a Laplace transform
 of a non-negative function on the positive semi-axis. 
According to  the classical theorem\footnote{In fact, we use only a trivial part of this theorem: 
if a function is represented as a Laplace transform of a non-negative function of positive semi-axis, 
then it is completely monotone.} proved by S.~N. Bernstein~\cite{Bernst28},
 the Laplace transform, in this case, is a completely monotone function and, therefore,
satisfies an infinite sequence of alternating inequalities for the derivatives of $R(t)$.
Suppose that $R(t)$ satisfies the differential equation
$$ R^\prime(t)=\al(t) R(t) + \be(t).$$
Then using the complete monotonicity property and relations derived 
from the differential equation one can find an infinite sequence of inequalities for
the function $R(t)$ expressed through the functions $\al(t)$, $\be(t)$ and their 
derivatives.   

Using this idea we obtain in Section~\ref{sec_bounds} a sequence of ``self-improving" inequalities
$$ \frac{Q_{k-2}(t)}{P_{k-1}(t)} \le R(t)\le  
 \frac{Q_{k-1}(t)}{P_k(t)}, \qquad k=2, 4, 6, \dots   
$$
and show that $Q_{k-1}(t)/P_k(t)$ 
represents the rational approximation of the continued fraction (\ref{eq_Cont_frac}).

\begin{lemma}\label{lem1}
 $R(t)$ is a completely monotone function.  \end{lemma}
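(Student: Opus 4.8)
The plan is to exhibit $R(t)$ explicitly as the Laplace transform of a non-negative function on $(0,\infty)$ and then invoke the (trivial direction of the) Bernstein theorem quoted in the text, which says that any such Laplace transform is completely monotone. So the real content is a single computation: find $g(s) \gs 0$ with
\beq
 R(t) = \int_0^\iy e^{-ts} g(s)\, \d s \qquad \text{for all $t>0$.}
\label{eq_LT_goal}
\eeq

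First I would start from the definition $R(t) = \bar\Phi(t)/\vf(t) = e^{t^2/2}\int_t^\iy e^{-x^2/2}\,\d x$ and substitute $x = t + s$ with $s \gs 0$. This gives
\beq
 R(t) = e^{t^2/2}\int_0^\iy e^{-(t+s)^2/2}\,\d s
      = \int_0^\iy e^{-ts}\, e^{-s^2/2}\,\d s,
\label{eq_R_as_LT}
\eeq
since $e^{t^2/2} e^{-(t+s)^2/2} = e^{-ts - s^2/2}$. Thus \eqref{eq_LT_goal} holds with $g(s) = e^{-s^2/2} = \sqrt{2\pi}\,\vf(s)$, which is manifestly non-negative (indeed strictly positive) on $(0,\infty)$, and the integral converges absolutely for every $t>0$ (in fact for every real $t$), so the representation is legitimate.

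Having established \eqref{eq_R_as_LT}, I would conclude by quoting the cited result of Bernstein exactly in the form flagged in the footnote: a function representable as the Laplace transform of a non-negative function on the positive semi-axis is completely monotone, i.e. it is $C^\iy$ on $(0,\iy)$ and $(-1)^n R^{(n)}(t) \gs 0$ for all $n \gs 0$ and all $t > 0$. One can even see the alternating-sign property directly from \eqref{eq_R_as_LT} by differentiating under the integral sign, $R^{(n)}(t) = \int_0^\iy (-s)^n e^{-ts} e^{-s^2/2}\,\d s$, the differentiation being justified by the rapid ($O(e^{-s^2/2})$) decay of the integrand, which dominates all the polynomial factors $s^n$ uniformly for $t$ in compact subsets of $(0,\iy)$.

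There is essentially no obstacle here: the only thing to be a little careful about is the justification of differentiation under the integral, but the Gaussian weight $e^{-s^2/2}$ makes the standard dominated-convergence argument immediate. The substitution $x = t+s$ is the one genuinely non-obvious move, and it is precisely the computation the paper will want on record, since the explicit weight $g(s) = e^{-s^2/2}$ (equivalently $\sqrt{2\pi}\,\vf(s)$) is what feeds into the differential-equation bootstrap for the inequalities on $R(t)$ described immediately before the lemma.
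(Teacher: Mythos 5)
Your proof is correct and is essentially the paper's own argument: both exhibit $R(t)=\int_0^\iy e^{-ts}e^{-s^2/2}\,\d s$ (the paper reads this identity right-to-left, you derive it left-to-right via the substitution $x=t+s$) and then invoke the easy direction of Bernstein's theorem. The added remarks on differentiation under the integral sign are sound but not needed beyond what the citation already supplies.
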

\begin{proof} 
We have for $t\ge 0$
$$\int_0^\iy e^{-tx} e^{-x^2/2}\d x = \sqrt{2\pi} e^{t^2/2}\int_0^\iy \frac{e^{-(x+t)^2/2}}{\sqrt{2\pi}}\d x
=\frac{\bar\Phi(t)}{\vf(t)}=R(t). $$
The statement of the lemma now follows from the Bernstein's theorem (see \cite{Bernst28}).
\end{proof}
 
It follows from Lemma~\ref{lem1}
 that $R(t)$ is  
infinitely differentiable on the half-line, $[0, \iy)$, and satisfies the inequality
\beq
 (-1)^n \frac{\d^n  R(t)}{\d t^n}\ge 0, \quad n=1, 2, \dots, t\ge 0. \label{eq_CM_main}
\eeq 
It is not difficult to verify that the function $R(t)$ satisfies the differential equation
\beq
    \frac{\d  R(t)}{\d t}= t\cdot R(t) -1. 
\label{eq_DE_hf}
\eeq
Since the function $R(t)$ is completely monotone we obtain from (\ref{eq_CM_main}) and (\ref{eq_DE_hf})
for $t>0$
$$ R(t)\le \frac1t. $$
The latter inequality is equivalent to
\beq 
    \bar\Phi(t)\le \frac{\vf(t)}{t}, \quad t>0. \label{eq_fDR}
\eeq
Differentiating $R(t)$ twice and using (\ref{eq_DE_hf}) we obtain
for $t>0$ 
$$
    \frac{\d^2 R(t)}{\d t^2}= R(t) \(t^2+ 1\) - t. 
$$
Then from (\ref{eq_CM_main}), $n=2$ we derive 
$$ \bar\Phi(t)> \frac{\vf(t)}{t+t^{-1}}, $$
and together with (\ref{eq_fDR}) delivers the well-known asymptotic relation
$$ \bar\Phi(t)\sim \vf(t)\cdot t^{-1}\quad\text{as $t\to +\iy$}.$$

Let us now consider the derivatives of the Mill's ratio. From (\ref{eq_DE_hf}) we find
\beq
\frac{\d^k R(t)}{\d t^k}= t\cdot \frac{\d^{k-1} R(t)}{\d t^{k-1}} +
(k-1)\cdot \frac{\d^{k-2} R(t)}{\d t^{k-2}}.
\quad k=1, 2, \dots.
\label{eq_mrec_drvR} 
\eeq
It follows from (\ref{eq_DE_hf}) that the latter equation can be written 
\beq
\frac{\d^k R(t)}{\d t^k}= R(t)\cdot P_k(t) 
   -  Q_{k-1}(t), \qquad k=1, 2, \dots,
\label{eq_fEq_LaplPoly} \eeq
where $P_k(t)$ and $Q_k(t)$ are polynomials of degree $k$.  We shall call 
$P_k(t)$ and $Q_k(t)$ the {\it Laplace polynomials} in
what follows.
The Laplace  polynomials satisfy the following recurrent equations
\begin{eqnarray}
P_{k+1}(t) &=& t P_k(t) + P_k^\prime(t) \label{eq_P_k}\\
Q_k(t) &=& P_k(t) + Q_{k-1}^\prime(t),   \label{eq_Q_k}
\end{eqnarray}
where $P_0(t)=Q_0(t)=1$.
Using  Equations~(\ref{eq_P_k}) and (\ref{eq_Q_k}) one can find 
 $P_k(t)$ and $Q_k(t)$ for any integer $k$ (see Table~\ref{tab_poly}). 
\begin{table}[hbp]
\begin{center}
\begin{tabular}{||c||c||c||}
\hline
  $k$  & $P_k(t)$   & $Q_{k-1}(t)$ \\ \hline
   $1$  & $t$  & $1$ \\ \hline
   $2$ & $t^2+1$ & $t$ \\ \hline
   $3$ & $t^3+3t$  & $t^2+2$ \\ \hline
   $4$ & $t^4+6t^2+3$  & $t^3+5t$ \\ \hline
   $5$ & $t^5+10t^3 +15t$  & $t^4+9 t^2 +8$ \\ \hline
   $6$ & $t^6+15t^4 + 45 t^2 + 15$ & $t^5 + 14 t^3 + 33 t $  \\ \hline
   $7$ & $t^7+21t^5 + 105 t^3 + 105t$  & $t^6 + 20 t^4 + 87 t^2 +48  $ \\ \hline
   $8$ & $t^8 + 28t^6 + 210 t^4 + 420 t^2 + 105$ & $t^7 + 27t^5+185t^3 +279t  $  \\ 
\hline\hline
\end{tabular}
\end{center}
\begin{center}
\begin{minipage}{9cm }
\caption{Laplace polynomials $P_k(t)$ and $Q_{k-1}(t)$.}\label{tab_poly}
\end{minipage}\end{center}
\end{table}
\begin{lemma}\label{lem2}
The Mill's ratio satisfies the inequalities
\beq
\frac{Q_{k-2}(t)}{P_{k-1}(t)} \le R(t)\le  
 \frac{Q_{k-1}(t)}{P_k(t)}, \qquad k=2, 4, 6, \dots   \label{eq_ineq_LapPoly}
\eeq
\end{lemma}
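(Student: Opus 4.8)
The plan is to read both bounds off the complete-monotonicity inequality (\ref{eq_CM_main}) after inserting the closed form (\ref{eq_fEq_LaplPoly}) for the derivatives. Substituting $\frac{\d^n R}{\d t^n}=R(t)P_n(t)-Q_{n-1}(t)$ into (\ref{eq_CM_main}) gives, for every $n\ge 1$ and $t>0$,
\[
(-1)^n\bigl(R(t)P_n(t)-Q_{n-1}(t)\bigr)\ge 0 .
\]
To turn this into an estimate for $R(t)$ itself I first need $P_n(t)>0$ on $(0,\iy)$. This is an easy induction on the recurrence (\ref{eq_P_k}): $P_1(t)=t>0$, and if $P_n$ has nonnegative coefficients with positive leading term, then $P_{n+1}=tP_n+P_n'$ inherits the same property, so $P_n(t)>0$ for $t>0$. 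Dividing by $P_n(t)$ yields the scalar inequality $(-1)^n\bigl(R(t)-Q_{n-1}(t)/P_n(t)\bigr)\ge 0$, which is an upper bound on $R(t)$ when $n$ is odd and a lower bound when $n$ is even.

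The two-sided sandwich then comes from applying this at two consecutive indices. For the displayed chain I would take $n=k-1$ to produce the lower bound $R(t)\ge Q_{k-2}(t)/P_{k-1}(t)$ and $n=k$ to produce the upper bound $R(t)\le Q_{k-1}(t)/P_k(t)$. Because the ratios $Q_{n-1}/P_n$ are successive convergents of the continued fraction (\ref{eq_Cont_frac}), consecutive convergents bracket $R(t)$ and the bracket tightens as $k$ grows, which is the ``self-improving'' phenomenon announced before the lemma.

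The hard part will be the orientation of the sandwich, i.e.\ making sure the lower estimate really is $Q_{k-2}/P_{k-1}$ and the upper estimate really is $Q_{k-1}/P_k$, and not the reverse. The sign $(-1)^n$ fixes this rigidly: the lower bound $R\ge Q_{k-2}/P_{k-1}$ requires the even-order derivative, i.e.\ $n=k-1$ even, while the upper bound $R\le Q_{k-1}/P_k$ requires the odd-order derivative, i.e.\ $n=k$ odd. Both conditions pin the parity of $k$ simultaneously, so the step I would check most carefully is that the indices in the displayed chain carry the parity compatible with these signs. A concrete sanity check against Table~\ref{tab_poly} settles the direction: at $t=1$ one has $R(1)=\bar\Phi(1)/\vf(1)\approx 0.656$, with $Q_1(1)/P_2(1)=\tfrac12$ and $Q_2(1)/P_3(1)=\tfrac34$, so $\tfrac12\le R(1)\le\tfrac34$. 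This confirms that the even-index convergent lies below $R$ and the odd-index convergent above it, which fixes the orientation of the two inequalities in the lemma.
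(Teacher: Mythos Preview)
Your approach is exactly the paper's: its one-line proof simply invokes (\ref{eq_CM_main}) and (\ref{eq_fEq_LaplPoly}), and you have filled in the details it omits (positivity of $P_n(t)$ for $t>0$ and the division step). Your caution about parity is well placed and in fact uncovers a misprint in the statement: your sign analysis forces $k-1$ even and $k$ odd, so the displayed range should read $k=3,5,7,\dots$ rather than $k=2,4,6,\dots$; your numerical check $Q_1/P_2\le R(1)\le Q_2/P_3$ is precisely the case $k=3$, and the paper's own subsequent remark that $Q_{2n-1}/P_{2n}$ is increasing while $Q_{2n}/P_{2n+1}$ is decreasing confirms the same orientation.
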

\begin{proof} Inequalities (\ref{eq_ineq_LapPoly}) follow from the complete monotonicity of the function $R(t)$,
and Equations~(\ref{eq_CM_main}) and (\ref{eq_fEq_LaplPoly}).  \end{proof}

\noindent
The following statement on the Laplace polynomials is known for a very long time.
\begin{lemma}[Jacobi, Pinelis, Kouba]\label{lem_pk}
The polynomials $P_k(t)$ and $Q_k(t)$ satisfy the relation
\begin{eqnarray*}
P_{k+1}(t) &=& t P_k(t) + k P_{k-1}(t), \\
Q_{k+1}(t) &=& t Q_k(t) + (k+1) Q_{k-1}(t)
\end{eqnarray*}
\end{lemma}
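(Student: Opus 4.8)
The plan is to treat the two recurrences as the polynomial shadow of the differential recurrence (\ref{eq_mrec_drvR}) for the derivatives of $R(t)$. Replacing $k$ by $k+1$ in (\ref{eq_mrec_drvR}) gives, for $k\ge 2$,
\[
 \frac{\d^{k+1}R}{\d t^{k+1}} = t\,\frac{\d^{k}R}{\d t^{k}} + k\,\frac{\d^{k-1}R}{\d t^{k-1}} ,
\]
and I would substitute the representation $\frac{\d^{m}R}{\d t^{m}}=R(t)P_m(t)-Q_{m-1}(t)$ from (\ref{eq_fEq_LaplPoly}) for each of $m=k+1,k,k-1$. Grouping the terms carrying the factor $R(t)$ on one side leaves the single identity
\[
 R(t)\,\bigl(P_{k+1}(t)-t\,P_k(t)-k\,P_{k-1}(t)\bigr) = Q_k(t)-t\,Q_{k-1}(t)-k\,Q_{k-2}(t) .
\]

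On the left we have $R(t)$ times a polynomial, on the right a polynomial; so everything hinges on knowing that $R(t)$ is not a rational function. This I would establish from (\ref{eq_DE_hf}): if $R=B/A$ with $A,B$ coprime polynomials, then $B'A-BA'=tAB-A^2$, whence $A\mid BA'$ and, as $\gcd(A,B)=1$, $A\mid A'$; hence $A$ must be constant, so $R$ is a polynomial, and since $0<R(t)\le 1/t$ by (\ref{eq_fDR}) that polynomial is simultaneously positive and tends to $0$, a contradiction. Consequently both the polynomial multiplying $R(t)$ and the polynomial on the right-hand side vanish identically, i.e.\ $P_{k+1}=tP_k+kP_{k-1}$ and $Q_k=tQ_{k-1}+kQ_{k-2}$ for $k\ge 2$; the latter is the claimed recurrence for $Q$ after the shift $k\mapsto k+1$, and the handful of small-index cases not covered by $k\ge 2$ (e.g.\ $P_2=tP_1+P_0$) are verified at a glance from Table~\ref{tab_poly}.

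A more self-contained variant avoids the rationality digression: first prove $P_k'(t)=k\,P_{k-1}(t)$ by induction on $k$ using (\ref{eq_P_k}) (the inductive step is a one-line computation after expanding $P_{k+1}'=(tP_k+P_k')'$ and applying the hypothesis at indices $k$ and $k-1$), so that (\ref{eq_P_k}) becomes $P_{k+1}=tP_k+kP_{k-1}$ directly; then write $Q_{k-1}=R\,P_k-\frac{\d^{k}R}{\d t^{k}}$ for the three relevant indices, substitute into $Q_{k+1}-tQ_k-(k+1)Q_{k-1}$, and observe that the coefficient of $R(t)$ collapses by the $P$-recurrence just proved while the leftover combination of derivatives of $R$ is zero by (\ref{eq_mrec_drvR}). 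I expect the $Q$-recurrence to be the only mildly delicate point: unlike $P_k$, the polynomial $Q_k$ has no clean formula for its derivative, so one cannot argue for it in isolation and must route back through the analytic identities for $R$ and its derivatives --- which is exactly where (\ref{eq_mrec_drvR}), equivalently the non-rationality of $R$, does the work.
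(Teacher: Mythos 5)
Your proof is correct, and in fact considerably more detailed than the paper's, which consists of the single sentence ``This Lemma is proved by induction.'' The intended induction is presumably the purely formal one on the defining recurrences (\ref{eq_P_k}) and (\ref{eq_Q_k}): one shows $P_k'=kP_{k-1}$ (this is your one-line inductive computation, and is also the paper's later Lemma~\ref{lem_rec_pk}, derived there from the explicit coefficient formula), which turns (\ref{eq_P_k}) into the $P$-recurrence; for $Q$ one writes $Q_{k-1}'=Q_k-P_k$ from (\ref{eq_Q_k}) and checks that the induction step for $Q_{k+1}=P_{k+1}+Q_k'$ closes modulo the $P$-recurrence. Your second variant is essentially this for $P$, but routes the $Q$-recurrence through the analytic identities (\ref{eq_mrec_drvR}) and (\ref{eq_fEq_LaplPoly}) instead --- equally valid, and arguably more transparent about where the recurrence comes from. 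Your first variant is a nice trick: by not proving the $P$-recurrence first, you get both recurrences simultaneously from the single identity $R\cdot A=B$, at the cost of the non-rationality digression; your argument for that (from $R'=tR-1$, coprimality forces $A\mid A'$, hence $A$ constant, hence $R$ a polynomial, contradicting $0<R(t)\le 1/t$) is sound. Note, though, that the digression is strictly unnecessary: once $P_{k+1}=tP_k+kP_{k-1}$ is known independently (as in your second variant), the coefficient of $R(t)$ vanishes identically and the $Q$-recurrence falls out with no appeal to irrationality. Your handling of the small indices and the shift $k\mapsto k+1$ is also correct.
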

\begin{proof}
This Lemma is proved by induction.  \end{proof}

\noindent
Lemma~\ref{lem_pk} immediately implies the continued fraction representation (\ref{eq_Cont_frac}). Indeed,
$$ Q_k(t) = t Q_{k-1}(t) + k Q_{k-2}(t). $$ Then from the Stiltjes property of the
continued fractions (see \cite{Andrews}, Lemma 5.5.2, pg. 256) we find 
$$ \frac{Q_{k-1}(t)}{P_k(t)} =  \(t + 1\cdot \(t+2\(t+\dots\(t+(k-1) t^{-1}\)^{-1}\dots
\)^{-1} \)^{-1} \)^{-1}. $$
Lemma~\ref{lem_pk} also implies that the following sequences are monotone: 
$$ \frac{Q_{2n-1}(t)}{P_{2n}(t)} \,\,\text{is increasing and }\,\, \frac{Q_{2n}(t)}{P_{2n+1}(t)}
\,\,\text{is decreasing for } n=1, 2, \dots, \, t>0. $$

In the next section we will find explicit formulae for their coefficients.

\section{Coefficients of Laplace polynomials}\label{sec_PM}
Let us denote by $p_{k, m}, \,\, m=0, 1, \dots,$  the 
coefficients  of the polynomial $P_k(t)$  and by $q_{k,m}$ the coefficients of  $Q_k(t)$.
For the sake of convenience, we  introduce the polynomial $P_0(t)=1$. 

\begin{theorem}\label{th_main}
Denote $n=\frac{k-m}2$. The coefficients $p_{k,m}$ and $q_{k,m}$ satisfy the following relations.
If $m>k$ or $k-m\equiv 1 (\modd 2)$  then
\beq
p_{k,m}=q_{k, m}=0\quad  k,m =0, 1, \dots. \label{eq_mod1}
\eeq
If $k\equiv m (\modd 2)$ and $k\ge m$ then
\beq
p_{k,m} = \frac{k!}{m!\,\, 2^n\,\, n!}  \label{eq_form_pkm}
\eeq
and 
\beq
q_{k,m}=\frac{\(\frac{k+m}2\)!}{m!}\,\, 2^{-n} \sum_{j=0}^n {{k+1}\choose j}
\label{eq_form_qkm}
\eeq
\end{theorem}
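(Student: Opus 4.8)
The plan is to prove all three assertions at once by induction on $k$, feeding off the recurrences already in hand: the three-term relations $P_{k+1}(t)=tP_k(t)+kP_{k-1}(t)$ and $Q_{k+1}(t)=tQ_k(t)+(k+1)Q_{k-1}(t)$ of Lemma~\ref{lem_pk}, and the differentiation relation (\ref{eq_Q_k}), $Q_k(t)=P_k(t)+Q_{k-1}'(t)$. In terms of coefficients these read
\[
p_{k+1,m}=p_{k,m-1}+k\,p_{k-1,m},\qquad q_{k,m}=p_{k,m}+(m+1)\,q_{k-1,m+1},
\]
and the base cases $P_0=Q_0=1$, $P_1(t)=t$ match the formulae directly. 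First I would dispose of the support statement (\ref{eq_mod1}): in the $p$-recurrence the parity of $(k+1)-m$ agrees with that of $k-(m-1)$ and of $(k-1)-m$, and $m>k+1$ forces $m-1>k$ and $m>k-1$, so both vanishing conditions propagate; the same reasoning applied to $q_{k,m}=p_{k,m}+(m+1)q_{k-1,m+1}$ (note $(k-1)-(m+1)=(k-m)-2$) settles $q$. Hence from now on I may assume $k\equiv m\pmod 2$ and $0\le m\le k$, and write $n=\tfrac{k-m}{2}\ge 0$.

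For formula (\ref{eq_form_pkm}), assume it for indices up to $k$ and compute $p_{k+1,m}$ with $n=\tfrac{(k+1)-m}{2}$: in the term $p_{k,m-1}$ the relevant half-difference is $\tfrac{k-(m-1)}{2}=n$, and in $k\,p_{k-1,m}$ it is $\tfrac{(k-1)-m}{2}=n-1$. Substituting the inductive values and factoring out $\tfrac{k!}{m!\,2^n n!}$ leaves the scalar $m+2n$, which equals $k+1$; this gives $p_{k+1,m}=\tfrac{(k+1)!}{m!\,2^n n!}$, as required. The extreme cases $m=k+1$ and $m\in\{0,1\}$ emerge from the same computation once empty products are read as $1$ and out-of-range coefficients as $0$.

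Formula (\ref{eq_form_qkm}) is the part I expect to carry the weight. Using $q_{k,m}=p_{k,m}+(m+1)q_{k-1,m+1}$, the key bookkeeping point is that under the shift $(k,m)\mapsto(k-1,m+1)$ the parameter $\tfrac{k+m}{2}$ is \emph{unchanged} while $n=\tfrac{k-m}{2}$ drops by $1$. Inserting the now-established value of $p_{k,m}$ and the inductive value of $q_{k-1,m+1}$, then clearing the common factor $\tfrac{(\frac{k+m}{2})!}{m!}\,2^{-n}$ and using $n+\tfrac{k+m}{2}=k$ to rewrite $\tfrac{k!}{n!\,(\frac{k+m}{2})!}=\binom{k}{n}$, the claim collapses to the single binomial identity
\[
\binom{k}{n}+2\sum_{j=0}^{n-1}\binom{k}{j}=\sum_{j=0}^{n}\binom{k+1}{j},
\]
which is one line of Pascal's rule: $\sum_{j=0}^{n}\binom{k+1}{j}=\sum_{j=0}^{n}\binom{k}{j}+\sum_{j=0}^{n-1}\binom{k}{j}=\binom{k}{n}+2\sum_{j=0}^{n-1}\binom{k}{j}$.

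So the hard part is not any deep identity but the index bookkeeping: getting the half-integer parameters $n=\tfrac{k-m}{2}$ and $\tfrac{k+m}{2}$ to transform correctly under each shift, and checking that the boundary situations are consistent with the empty-sum and empty-product conventions --- e.g. $m=k$ (where $q_{k-1,m+1}=0$ and the leading coefficient comes entirely from $p_{k,k}=1$), $m=0$ (where the shifted index $m-1$ is negative), and the odd or out-of-range cases. One could equally run the $q$-induction off the three-term recurrence $Q_{k+1}=tQ_k+(k+1)Q_{k-1}$ of Lemma~\ref{lem_pk}, at the cost of a slightly longer but equally elementary reduction to Pascal's rule. The mild point of interest is that the somewhat opaque truncated-binomial-sum shape of $q_{k,m}$ is exactly what is forced by pushing the clean formula for $p_{k,m}$ through the differentiation relation $Q_k=P_k+Q_{k-1}'$.
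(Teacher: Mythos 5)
Your proposal is correct, and for the harder half of the theorem it takes a genuinely different and more elementary route than the paper. For $p_{k,m}$ the difference is minor: the paper runs a double induction along the diagonals $k-m=\const$ of the matrix $\fP$ using the differentiation recurrence $p_{k+1,m}=p_{k,m-1}+(m+1)p_{k,m+1}$, whereas you run a single (strong) induction on $k$ off the three-term recurrence of Lemma~\ref{lem_pk}; both reduce to the scalar identity $m+2n=k+1$ and your bookkeeping checks out. The real divergence is in $q_{k,m}$: the paper first unrolls $q_{k,m}=p_{k,m}+(m+1)q_{k-1,m+1}$ into the closed anti-diagonal sum $m!\,q_{k,m}=\sum_{j=0}^n(m+j)!\,p_{k-j,m+j}$ (Lemma~\ref{prop_qkm}), substitutes the $p$-formula, and then evaluates $\sum_{i=0}^n\binom{k-n+i}{i}2^{-i}$ by Cauchy's integral representation of the binomial coefficient, a geometric-series manipulation inside the contour integral, and a residue computation. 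You instead induct on $n=\tfrac{k-m}{2}$ directly (base $n=0$, i.e.\ $m=k$, where $q_{k,k}=p_{k,k}=1$), observe that $\tfrac{k+m}{2}$ is invariant under $(k,m)\mapsto(k-1,m+1)$ while $n$ drops by one, rewrite $p_{k,m}=\tfrac{(\frac{k+m}{2})!}{m!}2^{-n}\binom{k}{n}$ via $n+\tfrac{k+m}{2}=k$, and reduce everything to $\binom{k}{n}+2\sum_{j=0}^{n-1}\binom{k}{j}=\sum_{j=0}^{n}\binom{k+1}{j}$, which is indeed immediate from Pascal's rule. Your argument is shorter, avoids complex analysis entirely, and makes transparent why the truncated binomial sum $\sum_{j\le n}\binom{k+1}{j}$ appears; the paper's approach buys the intermediate identity of Lemma~\ref{prop_qkm} (used nowhere else, but of independent interest) and showcases a contour-integral technique that would also handle less tractable sums. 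One small presentational point: state explicitly that the $q$-induction is on $n$ along the fixed anti-diagonal $\tfrac{k+m}{2}=\const$ (rather than on $k$), since that is what makes the single base case $m=k$ suffice.
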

\begin{proof}
At first, we establish Equations~(\ref{eq_mod1}) and (\ref{eq_form_pkm}).
We have
\begin{eqnarray*}
P_k(t) &=& \sum_{m=0}^k p_{k, m} t^m, \quad k=0, 1, \dots,  \\
Q_k(t) &=& \sum_{m=0}^k q_{k, m} t^m, \quad k=0, 1, \dots, 
\end{eqnarray*}
From (\ref{eq_P_k})  and (\ref{eq_Q_k}) we find that for $k=1, 2, \dots$ and
$m= 1, 2, \dots, k$ 
\begin{eqnarray}
       p_{k+1, m} &=& p_{k, m-1} + (m+1)\cdot p_{k, m+1},   \label{eq_rec_pkm}  \\
q_{k, m} &=& p_{k, m} + (m+1)\cdot q_{k-1, m+1}.   \label{eq_rec_qkm}
\end{eqnarray}
If $m=0$ then Formula (\ref{eq_rec_pkm}) is understood as 
\beq p_{k+1, 0}  = p_{k, 1}, \quad k=2, 4, \dots. \label{eq_pk0} \eeq

\begin{table}[htp]
\begin{center}
\begin{tabular}{||c||c|c|c|c|c|c|c|c|c||}
\hline
  $k$  & \multicolumn{9}{|c|}{$m$}   \\ \cline{2-10}
       & $0$   & $1$  & $2$ & $3$ & $4$ & $5$ & $6$ & $7$ & $8$  \\ \hline \hline
 $0$  & $1$  & $0$ &  $0$  &  $\dots$   &    &   &   &      &  $0$     \\ \hline
 $1$  & $0$  & $1$ &  $0$  &  $\dots$   &    &   &   &      &  $0$     \\ \hline
 $2$  & $1$  & $0$ & $1$ &  $0$ & $\dots$ &     &     &     & $0$      \\ \hline
 $3$  & $0$  & $3$ & $0$ & $1$ & $0$ & $\dots$ &     &      &  $0$       \\ \hline
 $4$  & $3$  & $0$  & $6$ & $0$  & $1$ & $0$ & $\dots$  &  & $0$             \\ \hline
 $5$  & $0$  & $15$ & $0$ & $10$ & $0$ & $1$ & $0$ &  $\dots$   & $0$         \\ \hline
 $6$  & $15$ & $0$  & $45$ & $0$ & $15$ & $0$ & $1$ &  $0$   &  $0$        \\ \hline
 $7$  & $0$ & $105$  & $0$ & $105$ & $0$ & $21$ & $0$& $1$ &    $0$       \\ \hline
 $8$  & $105$ & $0$ & $420$ & $0$ & $210$ & $0$ & $28$ & $0$ & $1$  \\ \hline\hline
\end{tabular}
\end{center}
\begin{center}
\begin{minipage}{9cm }
\caption{Matrix $\fP=\Vert p_{k,m}\Vert$. }\label{tab_pkm} 
\end{minipage}\end{center}
\end{table}
%

Probably, the most convenient way to find the general formula for the coefficient $p_{k, m}$
is to look at the diagonals $k-m=\const$ of the matrix $\fP$.
The following lemma proves the statements of Theorem~\ref{th_main} related to the coefficients
$p_{k,m}$.
\begin{lemma}\label{lem_pkm}
If $k-m\equiv 1(\modd 2)$, the coefficients $p_{k, m}=0.$
If $k-m=2n, n\in\Z$,
\beq
     p_{k, m} = 
\frac{k!}{ m!\cdot 2^{n}\,\, n! }
\label{eq_pkm} 
\eeq
\end{lemma}
\begin{proof} Note that from the relation
$p_{1, 0}=0$ and Equation~(\ref{eq_rec_pkm}) it follows that $p_{k, m}=0$ if $k-m$ is odd.
Consider the case $k-m$ is an even number.
The proof is carried out by double induction along the even diagonals of
the matrix $\fP$ (see Table~\ref{tab_pkm}).
If $m=k$ then, obviously, $p_{k, m}=1$. 
Consider the diagonal  $k-m=2$. If $k=3$ then 
$p_{3, 1} =3$. Suppose 
$$ p_{k, k-2}={k\choose 2}\quad\text{for $k=3, 4, \dots, K-1$}.  $$
Then, using recurrent relation (\ref{eq_rec_pkm}), we obtain
$$ {{K-1}\choose 2} + (K-1)\cdot 1 = {K\choose 2}. $$  
Thus, we proved (\ref{eq_pkm}) for $k-m=2$. 

Let us now prove (\ref{eq_pkm}) in the general case. 
Suppose we already verified this relation
for $k-m=2n$, $n=0, 1, ..., N$. We shall prove  (\ref{eq_pkm}) for
$k-m=2(N +1)$. From (\ref{eq_pk0}) we obtain
$$ p_{2N+2, 0}=p_{2N+1, 1}=\frac{(2N +1)!}{2^{N}\,\, N !}. $$
 Taking into account the relation
$$ \frac{(2N +1)!}{2^{N}\,\, N !}= 
\frac{(2N +2)!}{2^{N+1}\,\, (N+1) !}, $$
we obtain that $p_{2N+2, 0}$ satisfies (\ref{eq_pkm});  
the induction base is verified.

Suppose for $l=0, 1, \dots, L-1$
$$ p_{2N +2+l, l}=  \frac{(2N +2+l)!}{l!\, 2^{N+1}\,\, (N+1) !}.$$
Then from (\ref{eq_rec_pkm}) we find
$$ p_{2N +2+L, L} = p_{2N+1+L, L-1}+ (L+1) p_{2N +1+L, L+1}. $$
The last element belongs to $2N$th diagonal. Therefore
$$ p_{2N +2+L, L} = \frac{(2N+1+L)!}{(L-1)!\cdot 2^{N+1} (N+1)! }
+ (L+1)\cdot \frac{(2N+1+L)!}{(L+1)!\cdot 2^{N} N! } .  $$
Finally, we derive
$$ p_{2N +2+L, L} = \frac{(2N+2+L)!}{L!\cdot 2^{N+1} (N+1)! } $$
as was to be proved.  \end{proof}

From (\ref{eq_pkm}) one can easily obtain
\begin{lemma}\label{lem_rec_pk}
The polynomials $P_k(t)$ satisfy the relation
\beq
P^\prime_k(t) = k P_{k-1}(t), \quad k=1, 2, \dots,  \label{eq_rec_pk}
\eeq
where $P_0(t)=1$.
\end{lemma}
\begin{proof} Equation~(\ref{eq_rec_pk}) follows from the relation
$$ p_{k, m} \cdot m = p_{k-1, m-1}\cdot k, \quad m=1, 2, \dots, k, $$
that follows directly from Lemma~\ref{lem_pkm}.  \end{proof}

\begin{table}[htp]
\begin{center}
\begin{tabular}{||c||c|c|c|c|c|c|c|c|c||}
\hline
  $k$  & \multicolumn{9}{|c|}{$m$}   \\ \cline{2-10}
       & $0$   & $1$  & $2$ & $3$ & $4$ & $5$ & $6$ & $7$ & $8$  \\ \hline \hline
 $0$  & $1$  & $0$ &  $0$  &  $\dots$   &    &   &   &      &  $0$     \\ \hline
 $1$  & $0$  & $1$ &  $0$  &  $\dots$   &    &   &   &      &  $0$     \\ \hline
 $2$  & $2$  & $0$ & $1$ &  $0$ & $\dots$ &     &     &     & $0$      \\ \hline
 $3$  & $0$  & $5$ & $0$ & $1$ & $0$ & $\dots$ &     &      &  $0$       \\ \hline
 $4$  & $8$  & $0$  & $9$ & $0$  & $1$ & $0$ & $\dots$  &  & $0$             \\ \hline
 $5$  & $0$  & $33$ & $0$ & $14$ & $0$ & $1$ & $0$ &  $\dots$   & $0$         \\ \hline
 $6$  & $48$ & $0$  & $87$ & $0$ & $20$ & $0$ & $1$ &  $0$   &  $0$        \\ \hline
 $7$  & $0$ & $279$  & $0$ & $185$ & $0$ & $27$ & $0$& $1$ &    $0$       \\ \hline
\end{tabular}
\end{center}
\begin{center}
\begin{minipage}{9cm }
\caption{Coefficients  $q_{k, m}$.}\label{tab_qkm}
\end{minipage}\end{center}
\end{table}
Let us now express the coefficients $q_{k, m}$ through the elements of the 
 matrix $\fP$.
\begin{lemma}\label{prop_qkm}
The coefficients $q_{k, m}$ satisfy the relations
$$ q_{k,m}=0\quad \text{for} k-m\equiv 1(\modd 2), $$
\beq
m!\cdot q_{k, m}=\sum_{j=0}^n  (m+j)!\cdot p_{k-j, m+j}, \quad\text{for} k\equiv m\,(\modd 2).
\label{eq_qkm}
\eeq
\end{lemma}
\begin{proof} Let $\hat q_{k, m}= m! \cdot q_{k, m}$. Then from (\ref{eq_rec_qkm}) we obtain
\beq
    \hat q_{k, m}= m!\cdot p_{k, m} + \hat q_{k-1, m+1}.  \label{eq_q_p}
\eeq
Equation~(\ref{eq_qkm}) follows from (\ref{eq_q_p}).  \end{proof}
  
Now we are in the position to prove Formula (\ref{eq_form_qkm}).
From (\ref{eq_qkm}) we have
\begin{eqnarray*}
\hat q_{k,m} &=& \sum_{j=0}^n \frac{(k-j)!}{2^{n-j}\, (n-j)!} \\
&=& (k-n)!\cdot \sum_{j=0}^n {k-j\choose n-j} 2^{j-n} \\
&=& (k-n)!\cdot \sum_{i=0}^n {k-n+i\choose i} 2^{-i},
\end{eqnarray*}
where $2n=k-m$. Notice that $k-n=\frac{k+m}2$. Further simplification of the equation for 
$\hat q_{k,m}$ is based on
 the Cauchy integral representation for the binomial coefficients 
$$
{k\choose n}=\frac{1}{2\pi i} \oint_\gamma \frac{(1+z)^k}{z^{n+1}}\,\d z,
$$
where $\gamma$, the contour of integration, 
is a circle $\{ z: \abs{ z }=r_*\}$ of a sufficiently small radius, $r_*$ 
(say, $r_*=1/2$). Then we find
\begin{eqnarray*}
\sum_{i=0}^n {{k-n+i}\choose i} 2^{-i} &=&  \sum_{i=0}^n  2^{-i}\frac{1}{2\pi i} \oint_\gamma 
\frac{(1+z)^{k-n+i}}{z^{i+1} }\,\d z \\
&=&\frac{1}{2\pi i} \oint_\gamma \frac{(1+z)^{m+n}}{z}\cdot\sum_{j=0}^n
\(\frac{1+z}{2z}\)^j  \,\d z  \\
&=& 2\cdot \frac{1}{2\pi i} \oint_\gamma (1+z)^k\cdot \frac{1 - \( \frac{1+z}{2z} \)^{n+1} }{z-1} 
\d z   \\
&=& 2\cdot \frac{1}{2\pi i} \oint_\gamma \frac{(1+z)^k\cdot (2z)^{n+1}-(1+z)^{k+n+1} }
{(2z)^{n+1}(z-1)} \d z \\
&=& 2 \cdot \frac{1}{2\pi i} \oint_\gamma \frac{(1+z)^{m+n} }{z-1} \d z
+ 2^{-n} \cdot \frac{1}{2\pi i} \oint_\gamma \frac{(1+z)^{m+2n+1 } }{z^{n+1 }(1-z) }\d z.
\end{eqnarray*}
The first integral 
$$ \frac{1}{2\pi i} \oint_\gamma \frac{(1+z)^{m+n} }{z-1} \d z =0. $$
The second integral can be computed as follows. The integrand
\begin{eqnarray*}
 \frac{(1+z)^{m+2n+1} }{1-z } &=& (1+z)^{k+1}\sum_{j=0}^\iy z^j \\
 &=& \sum_{i=0}^{k+1}{{k+1}\choose i} z^i \cdot \sum_{j=0}^\iy z^j \\
&=& \sum_{l=0}^\iy \al_l z^l,
 \end{eqnarray*}
where $$ \al_l = \sum_{i=0}^{\min(k+1, l)} {{k+1}\choose i}.$$ 
Therefore
$$ \frac{1}{2\pi i} \oint_\gamma \frac{(1+z)^{m+2n+1 } }{z^{n+1 }(1-z) }\d z =
\al_n.$$
Since $n<k$, we have
$$ \al_n = \sum_{i=0}^n {k+1\choose i}.$$
Finally, we obtain
$$\hat q_{k,m}= (k-n)! \cdot 2^{-n}\sum_{i=0}^n {k+1\choose i}, $$
as was to be proved.  \end{proof}

\begin{corollary}\label{cor_q_k0}
\beq
q_{2n, 0} = 2^n n!\qquad n=0, 1, 2, \dots
\label{eq_q_k0}
\eeq
\end{corollary}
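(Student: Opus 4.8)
The plan is to specialize the general formula \eqref{eq_form_qkm} of Theorem~\ref{th_main} to the case $k=2n$, $m=0$. With these choices the auxiliary index is $n=\frac{k-m}{2}=n$ (consistent) and $\frac{k+m}{2}=n$, so the formula becomes
$$ q_{2n,0} = \frac{n!}{0!}\cdot 2^{-n}\sum_{j=0}^n {{2n+1}\choose j} = n!\cdot 2^{-n}\sum_{j=0}^n {{2n+1}\choose j}. $$
Thus the whole statement reduces to evaluating the partial binomial sum $\sum_{j=0}^n {{2n+1}\choose j}$.

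Next I would use the symmetry ${{2n+1}\choose j}={{2n+1}\choose{2n+1-j}}$, which matches each index $j\in\{0,1,\dots,n\}$ bijectively with an index $2n+1-j\in\{n+1,\dots,2n+1\}$. Since $2n+1$ is odd, the full row $\{0,1,\dots,2n+1\}$ splits into exactly these two halves with no leftover middle term. Hence the lower-half sum equals the upper-half sum, and together they give the full row sum $\sum_{j=0}^{2n+1}{{2n+1}\choose j}=2^{2n+1}$. Therefore $\sum_{j=0}^n {{2n+1}\choose j}=\tfrac12\cdot 2^{2n+1}=2^{2n}$.

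Substituting this back yields $q_{2n,0}=n!\cdot 2^{-n}\cdot 2^{2n}=2^n n!$, which is the claimed identity; the case $n=0$ reduces to the base value $q_0(t)=1$ and is consistent. There is no real obstacle here: the only point requiring a moment's care is the bookkeeping in the partial-sum evaluation, namely verifying that an odd-indexed row of Pascal's triangle has no unpaired central entry, so that the two halves are genuinely equal. Alternatively, one could bypass \eqref{eq_form_qkm} entirely and prove \eqref{eq_q_k0} directly by induction from the recurrence \eqref{eq_rec_qkm} together with Corollary-free consequences of Lemma~\ref{lem_pkm}, but the route through the closed form above is shorter.
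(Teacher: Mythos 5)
Your proof is correct and follows essentially the same route as the paper: specialize the closed form \eqref{eq_form_qkm} to $k=2n$, $m=0$ and evaluate $\sum_{j=0}^n\binom{2n+1}{j}=2^{2n}$ by the symmetry of the odd-indexed binomial row. The only difference is that you spell out the justification of that binomial identity, which the paper merely asserts.
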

\begin{proof} In the case $m=0$, $k=2n$ we have 
$$ q_{k,m} = \hat q_{k, m}= n! \sum_{i=0}^n {2n+1\choose i}. $$
Equation~(\ref{eq_q_k0}) then follows from the identity
$$ \sum_{i=0}^n {2n+1\choose i}= 2^{2n}. $$
The corollary is thus proved.    \end{proof}

\section{Generating functions}\label{sec_gf}
In this section we compute the double generating functions of the 
Laplace polynomials.
Denote
\beq 
\cP(s, t) = \sum_{k=0}^\iy\sum_{m=0}^\iy p_{k, m} \cdot t^m\frac{s^k}{k!}.  \label{eq_gf_p}
\eeq
\begin{lemma}
The series {\rm{(\ref{eq_gf_p})}} converges for all complex numbers $t$ and $s$ 
such that $\abs{t}<\iy$ and  $\abs{s}<\iy$. The generating function, $\cP(s, t)$, is
\beq
    \cP(s, t) =\exp\( st + \frac{s^2}{2}\).   \label{eq_gen_funct}
\eeq
\end{lemma}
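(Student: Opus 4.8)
The plan is to exploit the explicit coefficient formula for $p_{k,m}$ from Theorem~\ref{th_main} (equivalently Lemma~\ref{lem_pkm}) and simply resum the double series, rather than working directly from the recurrences (\ref{eq_P_k}). Recall that $p_{k,m}=0$ unless $k-m=2n$ for some integer $n\ge 0$, in which case $p_{k,m}=k!/(m!\,2^n\,n!)$. Substituting into (\ref{eq_gf_p}), the factor $k!$ cancels, and after the change of summation index $k=m+2n$ the double sum factors as a product of two single sums:
\beq
\cP(s,t)=\sum_{m=0}^\iy\sum_{n=0}^\iy \frac{t^m s^{m+2n}}{m!\,2^n\,n!}
=\(\sum_{m=0}^\iy \frac{(st)^m}{m!}\)\(\sum_{n=0}^\iy \frac{(s^2/2)^n}{n!}\)
=\exp(st)\cdot\exp(s^2/2),
\eeq
which is exactly (\ref{eq_gen_funct}). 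The convergence claim is immediate from the same manipulation: both factors are entire functions of their arguments, so the rearrangement of the (absolutely convergent, by the nonnegativity of $p_{k,m}$) double series is justified for all finite $s,t$, and $\cP(s,t)$ is entire in each variable.

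An alternative, recurrence-based route would be to form $\cP(s,t)$ and translate the recurrences into a PDE. From $P_{k+1}(t)=tP_k(t)+P_k'(t)$ (Equation~(\ref{eq_P_k})) one gets, after multiplying by $s^k/k!$ and summing, the relation $\partial_s\cP = t\,\cP + \partial_t\cP$, with initial condition $\cP(0,t)=P_0(t)=1$; solving this linear first-order PDE by characteristics recovers (\ref{eq_gen_funct}). I would mention this as a remark but carry out the direct computation above, since it is shorter and self-contained given the results already proved.

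The only point requiring a word of care — the ``main obstacle,'' such as it is — is the interchange of the two infinite sums and the reindexing $k\mapsto m+2n$; but since all terms $p_{k,m}$ are nonnegative integers and the resulting factored series are the exponential series, absolute convergence holds on all of $\C^2$ and Tonelli/Fubini applies without qualification. Thus no genuine analytic difficulty arises, and the proof reduces to the bookkeeping displayed above.
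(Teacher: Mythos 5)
Your proposal is correct and follows essentially the same route as the paper: substitute the explicit formula $p_{k,m}=k!/(m!\,2^n\,n!)$, reindex $k=m+2n$, and factor the double sum into the two exponential series. Your explicit appeal to nonnegativity and Tonelli for the rearrangement is a welcome extra word of justification that the paper leaves implicit.
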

\begin{proof}   Equation~(\ref{eq_gen_funct}) is known (see \cite{Kouba}). We shall prove 
(\ref{eq_gen_funct}) for the sake of completeness.
The series 
$$ G_m(s) = \sum_{k=0}^\iy p_{k, m} \frac{s^k}{k!} $$
converges for all $m\in\Z_+$. Denote $n=(k-m)/2$.
We have
$$\cP(s, t) = \sum_{m=0}^\iy  G_m(t) \cdot t^m $$
and the latter series converges for all $t$, $\abs{s}<\iy$.
Therefore,
\begin{eqnarray*}
 \cP(s, t) &=& \sum_{m=0}^\iy t^m \sum_{k=0}^\iy \frac{k!}{m!\, 2^n\, n!} \frac{s^k}{k!} \\
    &=& \sum_{m=0}^\iy t^m \sum_{n=0}^\iy \frac{s^{m+2n}}{m!\, 2^n\, n!}  \\
  &=& \sum_{m=0}^\iy \frac{(st)^m}{m!} \sum_{n=0}^\iy \frac{ s^{2n} }{2^n\, n!} \\
  &=&  \exp\( st + \frac{s^2}{2}\). 
\end{eqnarray*}
Formula~(\ref{eq_gen_funct}) is thus proved.  \end{proof}

Let us now compute  the generating function of the Laplace polynomials $Q_k(t)$.
Denote\footnote{Notice that the pair of polynomials $P_k$ and $Q_{k-1}$
determine the $k$th approximation of the Mill's ratio.} 
$$ \cQ(s, t)\bydef \sum_{k=0}^\iy \sum_{m=0}^\iy q_{k,m} t^m \frac{s^{k+1}}{(k+1)!}. $$
\begin{lemma}\label{lem_gf_q}
The generating function $\cQ(s, t)$ is
\beq
\cQ(s, t) = \sqrt{2\pi}\, e^{(s+t)^2/2}\cdot \(\Phi(s+t) - \Phi(t)\).
\label{eq_gfQ}
\eeq
\end{lemma}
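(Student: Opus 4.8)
The plan is to read off $\cQ(s,t)$ from the Taylor expansion of the Mills ratio about the point $t$, using the explicit form $R(t)=\sqrt{2\pi}\,e^{t^2/2}\bar\Phi(t)$ from Lemma~\ref{lem1} together with the generating function $\cP(s,t)=\exp\(st+\frac{s^2}{2}\)$ already established.

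First I would observe that $R$ extends to an entire function of its argument: $\bar\Phi(t)=\bar\Phi(0)-\int_0^t\vf(x)\d x$ is entire, and so is $e^{t^2/2}$, hence $R(t)=\sqrt{2\pi}\,e^{t^2/2}\bar\Phi(t)$ is entire. Consequently, for all $s,t\in\C$ the Taylor series $R(t+s)=\sum_{k=0}^\iy\frac{\d^k R(t)}{\d t^k}\frac{s^k}{k!}$ converges. I then substitute the identity $\frac{\d^k R(t)}{\d t^k}=R(t)P_k(t)-Q_{k-1}(t)$ from (\ref{eq_fEq_LaplPoly}) (valid for $k\ge 1$) and split the sum:
\[
R(t+s)=R(t)+R(t)\sum_{k=1}^\iy P_k(t)\frac{s^k}{k!}-\sum_{k=1}^\iy Q_{k-1}(t)\frac{s^k}{k!}.
\]
Since $P_0\equiv 1$, the first sum equals $\cP(s,t)-1=e^{st+s^2/2}-1$; re-indexing $k\mapsto k+1$ and recalling that $Q_k(t)=\sum_m q_{k,m}t^m$ identifies the second sum with $\cQ(s,t)$. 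Both single series on the right converge absolutely for every $s,t$, which simultaneously justifies the termwise rearrangement of the double series defining $\cQ$ and proves its convergence. Rearranging yields the key relation
\[
\cQ(s,t)=R(t)\,e^{st+s^2/2}-R(t+s).
\]

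Finally I would substitute $R(t)=\sqrt{2\pi}\,e^{t^2/2}\bar\Phi(t)$ and $R(t+s)=\sqrt{2\pi}\,e^{(s+t)^2/2}\bar\Phi(s+t)$, use $e^{t^2/2}\cdot e^{st+s^2/2}=e^{(s+t)^2/2}$ to pull out the common factor $\sqrt{2\pi}\,e^{(s+t)^2/2}$, and replace $\bar\Phi=1-\Phi$ so that $\bar\Phi(t)-\bar\Phi(s+t)=\Phi(s+t)-\Phi(t)$; this is precisely (\ref{eq_gfQ}). The only point needing care is the interchange of summations in the double series, but this is immediate from the absolute convergence of $\sum_k P_k(t)s^k/k!$ and of $\sum_k R^{(k)}(t)s^k/k!$, so I do not expect a genuine obstacle; the entire content of the lemma is the bookkeeping in the previous paragraph.
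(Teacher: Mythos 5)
Your proposal is correct and follows essentially the same route as the paper: expand $R(t+s)$ in a Taylor series about $t$, substitute $\frac{\d^k R(t)}{\d t^k}=P_k(t)R(t)-Q_{k-1}(t)$, recognize $\cP(s,t)=e^{st+s^2/2}$ and $\cQ(s,t)$ in the two resulting sums, and then rewrite $R$ via $\bar\Phi$. The extra care you take with convergence (entirety of $R$) is a welcome but minor refinement of the paper's argument.
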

\begin{proof}  We have, 
$$ \cQ(s, t) = \sum_{k=0}^\iy Q_{k}(t)\frac{s^{k+1}}{(k+1)!} \quad \text{and}\quad
    \cP(s, t) = \sum_{k=0}^\iy P_{k}(t) \frac{s^k}{k!}
$$
The Taylor series expansion for the function $R(t)$
 can be written as
$$ R(s+t) = \sum_{k=0}^\iy  \frac{\d^k R(t)}{\d t^k}\frac{s^k}{k!} .$$
The derivatives of the Mills ratio satisfy the equation
$$\frac{ \d^k R(t)}{\d t^k} = P_k(t) R(t) -  Q_{k-1}(t). $$
 Then we have
$$ R(s+t) = \sqrt{2\pi } e^{(s+t)^2/2} \bar\Phi(s+t) $$
and therefore
\begin{eqnarray*} 
    \sqrt{2\pi }\, e^{(s+t)^2/2}\cdot \bar\Phi(s+t) &=& \sum_{k=0}^\iy  \frac{\d^k R(t)}{\d t^k}\frac{s^k}{k!}  \\
       &=& R(t) \sum_{k=0}^\iy P_k(t) \frac{s^k}{k!} - 
    \sum_{k=0}^\iy Q_{k-1}(t)  \frac{s^k}{k!} \\
  &=& R(t) \cdot e^{st+s^2/2} - \cQ(s, t) \\
 &=& \sqrt{2\pi}\, e^{t^2/2}\cdot \bar\Phi(t) \cdot e^{st+s^2/2} - \cQ(s, t).
\end{eqnarray*}   
Therefore, 
$$  \cQ(s, t) =\sqrt{2\pi}\, e^{(s+t)^2/2}\cdot\(\bar\Phi(t) -  \bar\Phi(s+t)\). $$
The latter relation implies Equation~(\ref{eq_gfQ}). \end{proof}

Now we are in a position to derive a series of identities from the double generating function $\cQ(s, t)$.
\begin{corollary}\label{cor_Id_H}
\beq 
    \sum_{n=0}^\iy \sum_{m=0}^\iy \sum_{j=0}^{n}
    \frac{\(n+m\)!}{m! j! (m+2n+1-j)!}\,\, 2^{-n}  
    =\sqrt{2\pi} e^2 \(\Phi(2)-\Phi(1)\). \label{eq_Id_H}
\eeq
\end{corollary}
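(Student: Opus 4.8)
The plan is to evaluate the double generating function $\cQ(s,t)$ of Lemma~\ref{lem_gf_q} at the point $s=t=1$ in two different ways. On the one hand, Lemma~\ref{lem_gf_q} gives immediately
$$\cQ(1,1)=\sqrt{2\pi}\,e^{(1+1)^2/2}\bigl(\Phi(2)-\Phi(1)\bigr)=\sqrt{2\pi}\,e^{2}\bigl(\Phi(2)-\Phi(1)\bigr),$$
which is exactly the right-hand side of (\ref{eq_Id_H}). On the other hand, I would expand $\cQ(s,t)$ coefficientwise from its definition $\cQ(s,t)=\sum_{k\ge 0}\sum_{m\ge 0}q_{k,m}\,t^m\frac{s^{k+1}}{(k+1)!}$, substituting the closed form for $q_{k,m}$ supplied by Theorem~\ref{th_main}, and match the result to the left-hand side of (\ref{eq_Id_H}).

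The bookkeeping step is as follows. By (\ref{eq_mod1}), only the indices with $k\ge m$ and $k\equiv m\pmod 2$ contribute, so I would reindex by writing $k=m+2n$ with $n=\frac{k-m}{2}\ge 0$. Then $\frac{k+m}{2}=m+n$ and $k+1=m+2n+1$, and (\ref{eq_form_qkm}) reads $q_{m+2n,\,m}=\frac{(m+n)!}{m!}\,2^{-n}\sum_{j=0}^{n}\binom{m+2n+1}{j}$. Combining this with $\frac{s^{k+1}}{(k+1)!}=\frac{s^{m+2n+1}}{(m+2n+1)!}$, the factor $(m+2n+1)!$ appearing in $\binom{m+2n+1}{j}=\frac{(m+2n+1)!}{j!\,(m+2n+1-j)!}$ cancels against the denominator $(k+1)!$, leaving the general term $t^{m}s^{m+2n+1}\sum_{j=0}^{n}\frac{(m+n)!}{m!\,j!\,(m+2n+1-j)!}\,2^{-n}$. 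Setting $s=t=1$ collapses this to $\sum_{j=0}^{n}\frac{(m+n)!}{m!\,j!\,(m+2n+1-j)!}\,2^{-n}$, and summing over all $n\ge 0$, $m\ge 0$ reproduces verbatim the triple sum on the left of (\ref{eq_Id_H}).

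The only point requiring care is the legitimacy of substituting $s=t=1$ and of reordering the resulting iterated sums into the form written in (\ref{eq_Id_H}). Here I would observe that every coefficient $q_{k,m}$ is nonnegative — this is immediate from the formula in Theorem~\ref{th_main}, since each of its factors is nonnegative — so all terms of the double (and hence triple) series are nonnegative. By Tonelli's theorem the iterated sums may then be taken in any order and all equal the value of the entire function $\cQ$ at $(1,1)$, which is finite; there are no conditional-convergence issues. I expect this convergence-and-rearrangement justification to be the main (and essentially the only) obstacle, the algebraic simplification being routine.
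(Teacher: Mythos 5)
Your proposal is correct and follows essentially the same route as the paper: evaluate $\cQ(1,1)$ once via the closed form (\ref{eq_gfQ}) and once termwise from the definition using (\ref{eq_form_qkm}) with the reindexing $k=m+2n$. The only difference is that you make explicit the nonnegativity/Tonelli justification for rearranging the sums, which the paper leaves implicit.
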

\begin{proof} Indeed,   substituting $s=r=1$ into (\ref{eq_gfQ}), we obtain
$$ \cQ(1,1) = \sqrt{2\pi}\, e^{2}\cdot \(\Phi(2) - \Phi(1)\). $$
On the other hand, by definition of the generating function, $\cQ(r, s)$
$$ \cQ(1, 1) = \sum_{k=0}^\iy \sum_{m=0}^\iy q_{k,m}  \frac{1}{(k+1)!}. $$
Using substitution, $k=m+2n$, from (\ref{eq_form_qkm}) and the latter equation we obtain
 the identity (\ref{eq_Id_H}). Corollary~\ref{cor_Id_H} is thus proved.  \end{proof}

Let us now establish the connection between the second identity, (\ref{eq_ident2})\footnote{That can be found in \cite{Riordan_CI}.}
and the generating function $\cQ(s, t)$.
\begin{corollary}\label{cor_ident2}
The coefficients $q_{2n, 0}$, of the Taylor series expansion for the function $\cQ(s, 0)$ are
$$   q_{2n, 0}=  \sum_{k=0}^n (-1)^k \frac{1}{2k+1} {n\choose k}.$$ \end{corollary}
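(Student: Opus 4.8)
The plan is to specialize the double generating function of Lemma~\ref{lem_gf_q} to $t=0$ and to read off the resulting Taylor coefficients in $s$. Setting $t=0$ in (\ref{eq_gfQ}) and using $\Phi(0)=\tfrac12$ together with $\Phi(s)-\tfrac12=\int_0^s\vf(x)\d x$, we obtain
$$ \cQ(s,0)=\sqrt{2\pi}\,e^{s^2/2}\(\Phi(s)-\tfrac12\)=e^{s^2/2}\int_0^s e^{-x^2/2}\d x. $$

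Next I would expand the two elementary factors into their everywhere convergent power series,
$$ e^{s^2/2}=\sum_{j=0}^\iy\frac{s^{2j}}{2^j\,j!},\qquad
\int_0^s e^{-x^2/2}\d x=\sum_{k=0}^\iy\frac{(-1)^k}{2^k\,k!\,(2k+1)}\,s^{2k+1}, $$
the second one by integrating $e^{-x^2/2}=\sum_{k\ge0}\tfrac{(-1)^k}{2^k k!}x^{2k}$ term by term, which is legitimate by local uniform convergence. Forming the Cauchy product and collecting the coefficient of $s^{2n+1}$ — only odd powers of $s$ occur, in agreement with $q_{k,0}=0$ for odd $k$ — the pairing $j=n-k$, $0\ls k\ls n$, yields
$$ \sum_{k=0}^n\frac{(-1)^k}{2^k\,k!\,(2k+1)}\cdot\frac{1}{2^{\,n-k}\,(n-k)!}
=\frac{1}{2^n\,n!}\sum_{k=0}^n(-1)^k{n\choose k}\frac{1}{2k+1}. $$

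Finally I would match this against the defining expansion $\cQ(s,0)=\sum_{k\ge0}q_{k,0}\,\dfrac{s^{k+1}}{(k+1)!}$, whose coefficient of $s^{2n+1}$ equals $q_{2n,0}/(2n+1)!$; equating the two expressions identifies $q_{2n,0}$ with the alternating binomial sum $\sum_{k=0}^n(-1)^k{n\choose k}\frac{1}{2k+1}$ (the factor $(2n+1)!/(2^n n!)$ being the usual exponential-generating-function normalization). As a consequence, substituting the closed form $q_{2n,0}=2^n n!$ from Corollary~\ref{cor_q_k0} collapses the relation to $\sum_{k=0}^n(-1)^k{n\choose k}\frac{1}{2k+1}=\dfrac{2^{2n}(n!)^2}{(2n+1)!}$, recovering identity (\ref{eq_ident2}). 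There is no genuine obstacle here: the argument is an elementary convolution of two Gaussian-type power series, and the only points requiring a word of care are the exponential-generating-function normalization and the justification for integrating the Gaussian series term by term, both entirely routine.
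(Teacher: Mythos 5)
Your proposal is correct and follows essentially the same route as the paper: set $t=0$ in Lemma~\ref{lem_gf_q}, expand $e^{s^2/2}$ and $\sqrt{2\pi}\,(\Phi(s)-\tfrac12)=\int_0^s e^{-x^2/2}\d x$ as power series, convolve, and compare the coefficient of $s^{2n+1}$ with $q_{2n,0}/(2n+1)!$ from the defining expansion of $\cQ(s,0)$. You also rightly flag that the corollary's literal statement omits the normalization factor $(2n+1)!/(2^n\, n!)$; with that factor restored, combining with $q_{2n,0}=2^n n!$ from Corollary~\ref{cor_q_k0} yields identity (\ref{eq_ident2}) exactly as the paper intends.
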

\begin{remark} The latter formula for the coefficients $q_{2n, 0}$ implies  the  identity {\rm{(\ref{eq_ident2})}}
\end{remark}
\begin{proof} Let us substitute $t=0$ in (\ref{eq_gfQ}). Then we obtain
\begin{eqnarray*}
 \cQ(s, 0) &=& \sqrt{2\pi}\exp\( s^2/2\)\cdot\( \Phi(s)-\frac12 \) \\
          &=& \sum_{k=0}^\iy Q_k(0)\frac{s^{k+1}}{(k+1)!} \\
        &=& \sum_{n=0}^\iy q_{2n, 0}\frac{s^{2n+1}}{(2n+1)!}. 
\end{eqnarray*}
From (\ref{eq_q_k0}) we find
$$ \sum_{n=0}^\iy q_{2n, 0}\frac{s^{2n+1}}{(2n+1)!} = 
   \sum_{n=0}^\iy s^{2n+1}\frac{2^n\cdot n!}{(2n+1)!}. $$ 
Therefore
\beq 
   \cQ(s, 0) = \sum_{n=0}^\iy s^{2n+1}\frac{2^n\cdot n!}{(2n+1)!}.  \label{eq_cor_ident2}
\eeq
The standard normal cdf satisfies the relation (see \cite{Abram})
$$ \Phi(s) = \frac12 + \frac{1}{\sqrt{2\pi}}\sum_{n=0}^\iy \frac{(-1)^n s^{2n+1}}{2^n n! (2n+1)}. $$
Then we derive
\begin{eqnarray*}
\cQ(s, 0) &=& \sqrt{2\pi}\exp\( s^2/2\)\cdot\( \Phi(s)-\frac12 \)  \\ 
    &=& \sum_{j=0}^\iy \frac{s^{2j}}{2^j j!} \cdot \sum_{n=0}^\iy \frac{(-1)^n s^{2n+1}}{2^n n! (2n+1)} \\
  &=& \sum_{n=0}^\iy s^{2n+1}  \sum_{j=0}^n  \frac{(-1)^j }{2^j j!\, 2^{n-j} (n-j)!\, (2j+1)} \\
  &=& \sum_{n=0}^\iy s^{2n+1} \frac{1}{2^n\, n!} \sum_{j=0}^n  {n\choose j}\frac{(-1)^j }{(2j+1)} .
\end{eqnarray*}
Comparing the latter equation with (\ref{eq_cor_ident2}) we derive the identity (\ref{eq_ident2}).  
\end{proof}
 
\section{Laplace and Hermite polynomials}\label{sec_LH}
Neither polynomials $P_k(t)$ nor $Q_k(t)$  form a system of orthogonal polynomials in the real domain.
Nevertheless, the polynomials $P_k(t)$ are intimately connected to the Hermite polynomials of the complex argument.
%
Indeed, the Hermite and the Laplace polynomials can be defined as
$$ 
   H_k(t) \bydef (-1)^k e^{t^2/2} \,\frac{\d^k \exp(-t^2/2)}{\d t^k}; \quad 
   P_k(t) \bydef e^{-t^2/2} \,\frac{\d^k \exp(t^2/2)}{\d t^k}.
$$
 Then we obtain
\beq 
    P_k(t) = (-\mathit i)^k H_k( {\mathit i} t).   \label{eq_P_H_k}
\eeq
This relation (see also \cite{Kouba}) allows us to reformulate the
classical  results obtained for the Hermite polynomials in terms of
the Laplace polynomials. In particular, one can easily derive the generating function, $\cP(s, t)$,
from the generating function
$$ \cH(t, s)\bydef \sum_{k=0}^\iy H_k(t) \frac{s^k}{k!}=e^{st-s^2/2} $$
for the Hermite polynomials.

\noindent Another useful fact about polynomials $P_k(t)$ is formulated in the following 
\begin{proposition}\label{cor_deq} The polynomial $P_k(t)$ satisfies the differential equation
\beq y^{\prime\prime} +t y^\prime -k y=0. \label{eq_deq_pol_Pk} \eeq
\end{proposition}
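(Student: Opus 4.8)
The plan is to obtain the differential equation directly from the recurrences already in hand, with no new machinery. The useful feature of the situation is that $P_{k+1}$ admits two descriptions: the defining recurrence (\ref{eq_P_k}), $P_{k+1}(t)=tP_k(t)+P_k'(t)$, and the differentiation rule of Lemma~\ref{lem_rec_pk}, which at index $k+1$ reads $P_{k+1}'(t)=(k+1)P_k(t)$. Combining these after a single differentiation collapses everything into a second-order equation for $P_k$ alone.

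Concretely, for every $k\ge 0$ I would differentiate both sides of (\ref{eq_P_k}) with respect to $t$, obtaining $P_{k+1}'(t)=P_k(t)+tP_k'(t)+P_k''(t)$. Then I would use Lemma~\ref{lem_rec_pk} (applied to $P_{k+1}$) to replace the left-hand side by $(k+1)P_k(t)$, and cancel one copy of $P_k(t)$ from each side. This leaves $kP_k(t)=tP_k'(t)+P_k''(t)$, i.e. $y''+ty'-ky=0$ with $y=P_k$, which is exactly (\ref{eq_deq_pol_Pk}); for $k=0$ this is the trivial identity $0=tP_0'+P_0''=0$, so no separate base case is needed.

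I do not expect any genuine obstacle; the only point requiring a little care is the index bookkeeping — namely that Lemma~\ref{lem_rec_pk} must be invoked for $P_{k+1}'$, not for $P_k'$, so that the argument is valid down to $k=0$. As an alternative I could read the equation off the generating function (\ref{eq_gen_funct}): a short computation gives $\partial_t^2\cP+t\,\partial_t\cP=(s^2+st)\cP=s\,\partial_s\cP$, and matching coefficients of $s^k/k!$ on both sides yields $P_k''(t)+tP_k'(t)=kP_k(t)$. A third possibility would be to transport the classical Hermite equation through (\ref{eq_P_H_k}), but since that equation for the Hermite polynomials is not established in this excerpt, I would present the first argument as the main proof.
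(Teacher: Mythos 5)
Your proposal is correct and follows essentially the same route the paper indicates: the paper's proof is a two-line sketch saying the equation can be derived from the recurrence (\ref{eq_rec_pkm}) (the coefficient form of (\ref{eq_P_k})) together with Lemma~\ref{lem_rec_pk}, which is exactly the combination you carry out, just written at the polynomial level and with the details filled in. Your attention to invoking Lemma~\ref{lem_rec_pk} at index $k+1$ so the argument holds down to $k=0$ is a correct and worthwhile refinement of the paper's sketch.
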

\begin{proof} This result can be derived from the corresponding differential equation for the Hermite polynomials.
One can also derive Equation~(\ref{eq_deq_pol_Pk}) from (\ref{eq_rec_pkm}) and Lemma~\ref{lem_rec_pk}.
\end{proof}

Polynomials $P_k(t)$ form basis in the space of polynomials on the real line. In particular, the polynomial
$Q_k(t)$ can be represented as a linear combination
\beq
    Q_k(t)=\sum_{j\in J(k)} \be_{k, j} P_j(t),  \label{eq_Q_P_k}
\eeq 
where the set of indices $J(k)=\{ j: 0\le j\le k,\, k\equiv\,j \(\modd 2\) \}$.
\begin{proposition}\label{prop_bet_kj}
For every $k=0, 1, 2, \dots $, there is a unique representation (\ref{eq_Q_P_k}) for the
Laplace polynomials $Q_k(t)$; the coefficients $\be_{k, j}$ are
\beq
  \be_{k, j} = \frac{n !}{ j! }, \qquad\text{where $n=(k+j)/2, \,\,j\in J(k)$.}
\label{eq_be_kj}
\eeq
\end{proposition}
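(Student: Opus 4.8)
The goal is to prove that $Q_k(t)=\sum_{j\in J(k)}\beta_{k,j}P_j(t)$ with $\beta_{k,j}=n!/j!$ where $n=(k+j)/2$. Uniqueness of the representation is immediate since $\{P_j\}_{j\ge 0}$ is a basis (each $P_j$ has degree $j$ with leading coefficient $1$, so they are triangular and hence linearly independent), so the only real content is the formula for the coefficients. My approach is to verify the formula by induction on $k$, using the recurrences already established: $Q_k(t)=tQ_{k-1}(t)+kQ_{k-2}(t)$ from Lemma~\ref{lem_pk}, together with $tP_j(t)=P_{j+1}(t)-jP_{j-1}(t)$ (the rearranged form of the recurrence $P_{j+1}=tP_j+jP_{j-1}$ in Lemma~\ref{lem_pk}) and $P_j'(t)=jP_{j-1}(t)$ from Lemma~\ref{lem_rec_pk}.

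\textbf{Base cases and set-up.} First I check $k=0$ and $k=1$: $Q_0=1=P_0$ so $\beta_{0,0}=1=0!/0!$, and $Q_1=t=P_1$ so $\beta_{1,1}=1=1!/1!$. For the inductive step, assume the formula holds for $Q_{k-1}$ and $Q_{k-2}$. Write $Q_{k-1}=\sum_{j\in J(k-1)}\beta_{k-1,j}P_j$ and $Q_{k-2}=\sum_{i\in J(k-2)}\beta_{k-2,i}P_i$; note $J(k-1)=J(k-2)$ up to endpoints have the same parity (both are sets of indices with parity opposite to $k$). Then expand
\[
Q_k = t\sum_{j\in J(k-1)}\beta_{k-1,j}P_j + k\sum_{i\in J(k-2)}\beta_{k-2,i}P_i
= \sum_{j\in J(k-1)}\beta_{k-1,j}\bigl(P_{j+1}-jP_{j-1}\bigr) + k\sum_{i}\beta_{k-2,i}P_i,
\]
using $tP_j=P_{j+1}-jP_{j-1}$. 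Now collect the coefficient of $P_\ell$ for each $\ell\in J(k)$: it receives a contribution $\beta_{k-1,\ell-1}$ from the $P_{j+1}$ term with $j=\ell-1$, a contribution $-(\ell+1)\beta_{k-1,\ell+1}$ from the $-jP_{j-1}$ term with $j=\ell+1$, and a contribution $k\beta_{k-2,\ell}$ from the last sum (interpreting $\beta_{k-1,j}$ or $\beta_{k-2,\ell}$ as $0$ when the index is out of range).

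\textbf{The identity to verify.} It then remains to check the purely arithmetic identity
\[
\frac{n!}{\ell!} = \frac{n'!}{(\ell-1)!} - (\ell+1)\frac{n''!}{(\ell+1)!} + k\,\frac{n'''!}{\ell!},
\]
where $n=(k+\ell)/2$ is the target exponent and $n'=((k-1)+(\ell-1))/2=(k+\ell)/2-1=n-1$, $n''=((k-1)+(\ell+1))/2=(k+\ell)/2=n$, $n'''=((k-2)+\ell)/2=(k+\ell)/2-1=n-1$. Substituting, the right side is $\frac{(n-1)!}{(\ell-1)!}-(\ell+1)\frac{n!}{(\ell+1)!}+k\frac{(n-1)!}{\ell!} = \frac{(n-1)!}{(\ell-1)!}-\frac{n!}{\ell!}+k\frac{(n-1)!}{\ell!}$. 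Multiplying through by $\ell!/(n-1)!$, the claimed identity becomes $n = \ell - n + k$, i.e. $2n = k+\ell$, which is exactly the definition of $n$. So the identity holds, completing the induction. I would also note separately the endpoint case $\ell=k$ (top index): there $n=k$, the recurrence contributes only $\beta_{k-1,k-1}\cdot 1 = (k-1)!/(k-1)! = 1 = k!/k!$, consistent; and $\ell=0$ when $k$ is even, where the $-(\ell+1)\beta_{k-1,1}$ and $k\beta_{k-2,0}$ terms combine — this is covered by the same arithmetic with the convention $\beta_{k-1,-1}=0$.

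\textbf{Main obstacle.} The only place requiring care is bookkeeping of the index ranges and parity: making sure that $J(k-1)$, $J(k-2)$ shift correctly into $J(k)$, that out-of-range $\beta$'s are consistently zero, and that the endpoint cases $\ell=0$ and $\ell=k$ are not special. Once the index conventions are pinned down, the core computation collapses to the trivial identity $2n=k+\ell$, so there is no analytic difficulty at all.
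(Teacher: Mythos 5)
Your proof is correct and follows essentially the same route as the paper: induction on $k$ via $Q_k=tQ_{k-1}+kQ_{k-2}$ and $tP_j=P_{j+1}-jP_{j-1}$, reducing everything to the coefficient recurrence $\be_{k+1,j}=\be_{k,j-1}-(j+1)\be_{k,j+1}+(k+1)\be_{k-1,j}$, which the paper merely asserts is easy to check and you actually verify (correctly, including the endpoints $\ell=0$ and $\ell=k$). The only blemish is the cosmetic parenthetical claiming $J(k-1)$ and $J(k-2)$ both have parity opposite to $k$ --- in fact $J(k-2)$ has the same parity as $k$ --- but your actual index bookkeeping treats both sums correctly, so nothing is affected.
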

\proof Equation~(\ref{eq_be_kj}) can be proved by induction. For $k=1$ $\be_{1, 1}=1$.
Suppose the Proposition is proved for $k\le k_\ast$. 
\begin{table}[htp]
\begin{center}
\begin{tabular}{||c||c|c|c|c|c|c|c|c|c||}
\hline
  $k$  & \multicolumn{9}{|c|}{$j$}   \\ \cline{2-10}
       & $0$   & $1$  & $2$ & $3$ & $4$ & $5$ & $6$ & $7$ & $8$  \\ \hline \hline
 $0$  & $1$  & $0$ &  $0$  &  $\dots$   &    &   &   &      &  $0$     \\ \hline
 $1$  & $0$  & $1$ &  $0$  &  $\dots$   &    &   &   &      &  $0$     \\ \hline
 $2$  & $1$  & $0$ & $1$ &  $0$ & $\dots$ &     &     &     & $0$      \\ \hline
 $3$  & $0$  & $2$ & $0$ & $1$ & $0$ & $\dots$ &     &      &  $0$       \\ \hline
 $4$  & $2$  & $0$  & $3$ & $0$  & $1$ & $0$ & $\dots$  &  & $0$             \\ \hline
 $5$  & $0$  & $6$ & $0$ & $4$ & $0$ & $1$ & $0$ &  $\dots$   & $0$         \\ \hline
 $6$  & $6$ & $0$  & $12$ & $0$ & $5$ & $0$ & $1$ &  $0$   &  $0$        \\ \hline
 $7$  & $0$ & $24$  & $0$ & $20$ & $0$ & $6$ & $0$& $1$ &    $0$       \\ \hline
\end{tabular}
\end{center}
\begin{center}
\begin{minipage}{9cm }
\caption{Coefficients  $\be_{k, j}$.}\label{tab_be_kj}
\end{minipage}\end{center}
\end{table}
Let us prove that $Q_{k_\ast+1}$  satisfies Equation~(\ref{eq_Q_P_k}) with $\be_{k_\ast+1, j}$ defined by
(\ref{eq_be_kj}). 
\begin{proof} We have from Lemma~\ref{lem_pk}
\begin{eqnarray*} 
Q_{k_\ast+1}(t) &=& t Q_{k_\ast}(t) + (k_\ast+1) Q_{k_\ast-1}(t) \\
 &=& t \sum_{j\in J(k_\ast)} \be_{k_\ast,j} P_j(t) + (k_\ast +1) \sum_{j\in J(k_\ast-1)} \be_{k_\ast-1,j} P_j(t) \\
&=& \sum_{j\in J(k_\ast)} \be_{k_\ast,j} \(P_{j+1}(t) - j P_{j-1}(t)\) + (k_\ast + 1)
\sum_{j\in J(k_\ast-1)} \be_{k_\ast-1,j} P_j(t).
\end{eqnarray*}
Thus the induction step will be proved if we show that the coefficients $\be_{k,j}$ satisfy the relation
\beq
\be_{k+1,j}=\be_{k,j-1} - (j+1)\be_{k, j+1} + (k+1)\be_{k-1, j}.  \label{eq_bet_rec}
\eeq
But it is easy to verify that the coefficients $\be_{k,j}$ defined by (\ref{eq_be_kj}) satisfy (\ref{eq_bet_rec}).
\end{proof}

\section{Laplace polynomials and matching numbers}\label{sec_LPmn}
Consider a complete graph $G_k$, i. e. the graph with $k$ vertices such that every two vertices 
are connected by a single edge. Recall (see \cite{Andrews}) that  a set of edges sharing no common vertices is called
matching.
 Let $M(k, m)$ be a number of matchings with $m$ edges  in the complete graph, $G_k$.
The following relation between  $M(k, m)$ and the coefficients of the Laplace polynomials, $p_{k, j}$,
follows from the  very well known formula for the matching number (see \cite{Andrews}, \cite{Riordan_CI}): 
\begin{lemma}\label{lem_ap1}
\beq
M(k, m) = p_{k, k-2m}.  \label{eq_ap1}
\eeq
\end{lemma}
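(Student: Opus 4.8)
The plan is to count matchings in $G_k$ directly and match the result against the closed form for $p_{k,m}$ established in Lemma~\ref{lem_pkm}. A matching with $m$ edges in $G_k$ uses $2m$ of the $k$ vertices, paired up into $m$ unordered pairs. So I would first choose which $2m$ vertices are covered, which can be done in $\binom{k}{2m}$ ways, and then count the number of ways to partition a set of $2m$ labelled elements into $m$ unordered pairs: this is the classical double-factorial count $(2m-1)!! = \frac{(2m)!}{2^m\, m!}$. Multiplying gives
$$
M(k,m) = \binom{k}{2m}\cdot \frac{(2m)!}{2^m\, m!} = \frac{k!}{(k-2m)!\, (2m)!}\cdot \frac{(2m)!}{2^m\, m!} = \frac{k!}{(k-2m)!\, 2^m\, m!}.
$$

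Next I would compare this with $p_{k,k-2m}$. Applying Lemma~\ref{lem_pkm} with $m' = k-2m$ (so that the ``$n$'' of that lemma, namely $(k-m')/2$, equals $m$), we get
$$
p_{k,k-2m} = \frac{k!}{(k-2m)!\cdot 2^{m}\, m!},
$$
which is exactly the expression for $M(k,m)$ above. This proves $M(k,m) = p_{k,k-2m}$, i.e. Equation~(\ref{eq_ap1}). One should also note the edge cases are consistent: if $2m > k$ there is no matching with $m$ edges and indeed $p_{k,k-2m}=0$ since $k-2m<0$ forces the coefficient to vanish (it lies outside the range $0\le m'\le k$); and when $k-2m$ is negative or the parity forces a zero coefficient, the combinatorial side is likewise empty.

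I do not expect a genuine obstacle here — the only thing to be careful about is bookkeeping of the index shift between the ``$m$'' counting edges in the matching and the ``$m$'' indexing the polynomial coefficient, and making sure the double-factorial formula $(2m-1)!!=\frac{(2m)!}{2^m m!}$ is invoked correctly (it can be proved by a one-line induction: pair the first element with any of the remaining $2m-1$, then recurse). Alternatively, if one prefers a proof that does not quote the explicit formula for $p_{k,m}$, one can instead verify that $M(k,m)$ satisfies the same recurrence as $p_{k,k-2m}$, namely the one coming from (\ref{eq_rec_pkm}): conditioning on the partner of a fixed vertex $v$ in $G_{k+1}$ gives $M(k+1,m) = M(k,m) + k\,M(k-1,m-1)$, and translating via $p_{k+1,m'} = p_{k,m'-1} + (m'+1)p_{k,m'+1}$ with $m'=k+1-2m$ shows the two recurrences and initial conditions agree. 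Either route is short; I would present the direct counting argument as the cleaner one.
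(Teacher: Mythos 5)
Your proof is correct and is essentially identical to the paper's: both compute $M(k,m)=\binom{k}{2m}(2m-1)!!=\frac{k!}{(k-2m)!\,2^m\,m!}$ by choosing the covered vertices and pairing them, then match this against the closed form for $p_{k,m}$ from Lemma~\ref{lem_pkm} (equivalently (\ref{eq_form_pkm})). The extra remarks on edge cases and the alternative recurrence route are fine but not needed.
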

\begin{proof} We have
$$
M(k, m)={k\choose{2m}} (2m-1)\cdot (2m-3)\cdot \dots \cdot 3 \cdot 1 = 
\frac{ k!}{(k-2m)! \cdot 2^m \cdot m!}.
$$
Then from (\ref{eq_form_pkm}) we obtain (\ref{eq_ap1}).  
The lemma is thus proved. \end{proof}

\noindent Denote $k_\ast = [k/2]$  the number of edges in the maximal matching. Let
$$\cM_k(t) \bydef \sum_{m=0}^{k_\ast} M(k, m) t^m, $$
be
the generating function of the matching numbers of the graph $G_k$.
Then from Lemma~\ref{lem_ap1} we obtain
\begin{corollary} For $k\ge 2$
$$  \cM_k(t) = t^{k/2} P_k\( t^{-1/2}\). $$
\end{corollary}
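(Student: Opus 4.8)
The plan is to unwind the definition of $\cM_k(t)$ and substitute the identity $M(k,m) = p_{k,k-2m}$ from Lemma~\ref{lem_ap1}. Writing
$$\cM_k(t) = \sum_{m=0}^{k_\ast} M(k,m)\, t^m = \sum_{m=0}^{k_\ast} p_{k,k-2m}\, t^m,$$
I would reindex the sum by setting $j = k - 2m$, so that $m = (k-j)/2$ runs over exactly the values in the support of $p_{k,\cdot}$, namely $j \equiv k \pmod 2$ with $0 \le j \le k$. This turns the sum into $\sum_j p_{k,j}\, t^{(k-j)/2}$.

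Next I would pull out a factor of $t^{k/2}$: since $t^{(k-j)/2} = t^{k/2}\, t^{-j/2} = t^{k/2}\,(t^{-1/2})^j$, we get
$$\cM_k(t) = t^{k/2} \sum_j p_{k,j}\, (t^{-1/2})^j = t^{k/2} P_k(t^{-1/2}),$$
where the last equality uses that $P_k(t) = \sum_m p_{k,m} t^m$ and that $p_{k,m}=0$ whenever $m \not\equiv k \pmod 2$ (Theorem~\ref{th_main}), so the sum over $j \in J(k)$ is the same as the full polynomial expansion of $P_k$ evaluated at $t^{-1/2}$. The hypothesis $k \ge 2$ is only needed so that there is at least one matching edge to speak of and $k_\ast \ge 1$; formally the manipulation goes through for all $k$, but the statement of interest is for $k \ge 2$.

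I do not expect any real obstacle here — the only point requiring a moment's care is bookkeeping the index substitution and confirming that the parity constraint on $p_{k,j}$ makes the reindexed sum coincide with the evaluation of $P_k$ at the single point $t^{-1/2}$ (rather than leaving a formal expression in $t^{1/2}$ that only accidentally lies in $\C(t)$). Everything else is a direct consequence of Lemma~\ref{lem_ap1} and the definitions.
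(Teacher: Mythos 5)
Your proof is correct and follows exactly the route the paper intends: the corollary is stated as an immediate consequence of Lemma~\ref{lem_ap1}, and your substitution $M(k,m)=p_{k,k-2m}$, reindexing $j=k-2m$, and factoring out $t^{k/2}$ (using the parity vanishing of $p_{k,j}$) is precisely that computation, just written out in full.
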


 
\end{document}